\newcommand{\monthyear}[1]{%
  \def\@monthyear{\uppercase{#1}}}
\newcommand{\volnumber}[1]{%
  \def\@volnumber{\uppercase{#1}}}
\def\ps@plain{\ps@empty
  \def\@oddfoot{\@monthyear \hfil \thepage}%
  \def\@evenfoot{\thepage \hfil \@volnumber}}
\def\ps@firstpage{\ps@plain}
\def\ps@headings{\ps@empty
  \def\@evenhead{%
    \setTrue{runhead}%
    \def\thanks{\protect\thanks@warning}%
    \uppercase{Primality testing and integer factorization}\hfil}%
  \def\@oddhead{%
    \setTrue{runhead}%
    \def\thanks{\protect\thanks@warning}%
    \hfill\uppercase{Primality testing and integer factorization}}%
  \let\@mkboth\markboth
  \def\@evenfoot{%
    \thepage \hfil \@volnumber}%
  \def\@oddfoot{%
    \@monthyear \hfil \thepage}%
  }%
\theoremstyle{plain}
\numberwithin{equation}{section}
\newtheorem{thm}{Theorem}[section]
\newtheorem{Theorem}[thm]{Theorem}
\newtheorem{Lemma}[thm]{Lemma}
\newtheorem{example}[thm]{Example}
\newtheorem{definition}[thm]{Definition}
\newtheorem{corollary}[thm]{Corollary}
\begin{document}
\fancyhf{}% Clear header/footer
\chead{\textbf{On $\pi(n)$ and $\sum_{3 \leq p \leq n} \frac{1}{p}$}}% Centre header
\cfoot{\textbf{}\ \thepage}% Centre footer
\pagestyle{fancy}

%% replace the values in the next three lines by the correct information
\monthyear{12 2016}
\volnumber{Volume, Number}
\setcounter{page}{1}

\title{On the prime counting function and the partial sum of reciprocals of odd primes}
\author{Madieyna Diouf}
%\address{Department of Mathematics and Statistics\\
 %               Dalhousie University\\
  %              Halifax, Nova Scotia\\
  %              B3H 3J5, Canada}
\email{mdiouf1@asu.edu}
\markright{Onto $\pi(n)$ and $\sum_{3 \leq p \leq n} \frac{1}{p}$}%{A Refinement of $\pi(n)$ and $\sum_{3 \leq p \leq n} \frac{1}{p}$} % the Prime Counting Function} 
%\date{Sep 09, 2016}
%\author{Madieyna Diouf}
%\email{mdiouf1@asu.edu}
\subjclass[2010]{Primary 11A41;
  Secondary 11A51} 
%\subjclass{11A41}%; 11A51; 11Y05; 11Y11}
%\thanks{Many thanks to ..... }
% \maketitle
\begin{abstract}
We present a function that tests for primality, factorizes composites and builds a closed form expression of $\pi(n^2)$ in terms of $\sum_{3 \leq p \leq n} \frac{1}{p}$ and a weaker version of $\omega(n)$. \\\\%the number of distinct prime factors of $n$.\\\\
%We present a refinement of the prime counting function, and the partial sum of the reciprocals of odd primes. The studied problem is of interest as an improvement would lead to a better understanding of the distribution of prime numbers. We obtained our result by counting the number of odd integers in the interval $(1, n^2]$. This leads to an improved equation in which $\pi(n^2)$ depends simply on a weaker version of $\omega(n)$ the number of distinct prime factors of $n$,  and $\sum_{3 \leq p \leq n} \frac{1}{p}$ the partial sum of the reciprocals of odd primes less than or equal to $n$.\\\\
\textsc {Keywords:} \\
{\tiny{Primality test; Integer factorization; Prime counting function; Prime factors; Partial sum of reciprocals of primes.}}
\end{abstract}
\maketitle
\section{Introduction}
The prime counting function $\pi({n})$, the partial sum of the reciprocals of odd primes  $\sum_{3 \leq p \leq n} \frac{1}{p}$, and $\omega(n)$ the number of distinct prime factors of $n$, have historic interests for being well studied by Euler\cite{bibiloni2006series}, Hardy, Ramanujan\cite{hardy2000normal}, and Erdos\cite{erdos1935normal}. We establish an equation that unites these three functions. \\
\: \: Consider a function $f(n,x)$ that takes two positive odd integers $n$ and $x$ and returns the smallest odd multiple of $x$ exceeding $n$. Example: $f(1, 3) = 3$, $f(5, 3) = 9$. We observe that, \\
\: \:  the minimum of $f(n,p)$ taken over odd primes $p\leq \lceil \sqrt{n} \: \rceil$, is the smallest odd composite exceeding $n$; call this $c_1$.\\
\: \:  If $n$ is odd, then $n+2$ is a prime if and only if the gap between $c_1$ and $n$ is $4$ or $6$. Hence, one has a primality test based on computing $f(n,x)$. We also point out that,\\
\: \: if $n$ and $x$ are positive odd integers, then $x\: | \: n$ if and only if $f(n-2, x) = n$; this gives a second primality test and factorization criterion. \\
\: \: Furthermore, $f(n,x)$ is used to count the number of odd integers in the interval $(n^2, (n+2)^2]$. This count is set to be equal to the expected number of odd integers in the same interval given by $\frac{(n+2)^2 - n^2}{2}$. The resulting equality gives an exact formula of $\sum_{3 \leq p \leq n} \frac{1}{p}$ and the outcome in this argument is illustrated by examples and applications.\\
\: \: We exhibit a closed form expression % We repeat the counting process in a new interval $(1, n^2]$ and obtain a closed form expression 
of $\pi({n^2})$ in terms of $\sum_{3 \leq p \leq n} \frac{1}{p}$, and a weaker version of $\omega(n)$, and compare the expression of $\pi(n)$ provided by the prime number Theorem with $\pi(n^2)$ that rises from counting the odd integers in the interval $(1, n^2]$. %obtained by counting the odd integers in the interval $(1, n^2]$.% using $f(n,x)$. 
 %\end{abstract}
% \maketitle
\section{Introducing the function $f(n,x)$}
\begin{definition}
\textup {
\\ Given a positive odd integer $n > 3$, let $A_n$ denote the set of all odd primes less than or equal to the ceiling of square root of $n$. That is, $A_{n}= \{3 \ldots \lceil \sqrt{n} \: \rceil \}$ Define a function 
}
\begin{equation}
%\fbox{$
f(n, x) = n + 2x - (n - x) mod(2x),  \:\: where \: x  \in A_{n}.%$}
\end{equation}
For each prime  $x$ in $A_n$, the function $f(n, x)$ yields the smallest odd integer multiple of $x$ greater than $n$. \\
\end{definition}
\begin{example}
%\footnotesize
%\textup {
let $n = 81$, we have $ \lceil \sqrt{n} \:  \rceil \ =9$. Hence, $A_{n}=\{3.5.7\}$.  
\[ f(81, 3) = 81 + 2(3) - [(81 - 3)mod(2*3)] =87 .\]
\[ f(81, 5) = 81+ 2(5) - [(81 - 5)mod(2*5)] =85 .\]
\[ f(81, 7) = 81 + 2(7) - [(81- 7)mod(2*7)] =91 .\]
$f(81, 3) = 87$, is the smallest odd multiple of $3$ greater than $81$.\\
$f(81, 5) = 85$, is the smallest odd multiple of $5$ greater than $81$.\\
$f(81, 7) = 91$, is the smallest odd multiple of $7$ greater than $81$.\\
%}
\end{example}
\begin{example}
For $n = 111$, the set of all odd primes less than or equal to $\lceil \sqrt{111} \:  \rceil$ is $A_{n} =\{3, 5, 7, 11\}.$ Note: The prime number $11$ is in the set $A_n$ because of the ceiling of square root of $n$. \\ f(n, x) = n + 2x - [(n - x) mod(2x)] where $x$ is in $A_{n}.$
\[ f(111, 3) = 111 + 2(3) - [(111 - 3)mod(2*3)] =117 .\]
\[ f(111, 5) = 111 + 2(5) - [(111 - 5)mod(2*5)] =115 .\]
\[ f(111, 7) = 111 + 2(7) - [(111 - 7)mod(2*7)] =119 .\]
\[ f(111, 11) = 111 + 2(11) - [(111 - 11)mod(2*11)] =121 .\]
$f(111, 3) = 117$, is the smallest odd multiple of $3$ greater than $111$.\\
$f(111, 5) = 115$, is the smallest odd multiple of $5$ greater than $111$.\\
$f(111, 7) = 119$, is the smallest odd multiple of $7$ greater than $111$.
%$f(111, 11) = 121$ ...
\end{example}
\textbf{What the function $f(n, x)$ does? } The value $(n - x) mod(2x)$ is the distance between $n$, to the largest odd multiple of $x$, less than or equal to $n$; therefore, $ n - [(n - x) mod(2x)]$ is the largest odd multiple of $x$ less than or equal to $n$. Add the distance $2x$ to the largest odd multiple of $x$ less than or equal to $n$, this gives the smallest odd multiple of $x$ that is greater than $n$. \\
\begin{Lemma} 
Given two positive odd integers $n$ and $x$, the function $f(n, x)$ yields the smallest odd multiple of $x$ exceeding $n$.\\
\end{Lemma}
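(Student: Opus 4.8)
The plan is to reduce the claim to the division algorithm applied to $n-x$ with divisor $2x$. First I would record the elementary fact that, because $x$ is odd, an integer $m$ is an odd multiple of $x$ if and only if $m = (2j+1)x$ for some integer $j \ge 0$, equivalently if and only if $m - x$ is a nonnegative multiple of $2x$. This is what converts the problem into a statement about remainders modulo $2x$: the positive odd multiples of $x$ are precisely the points $x, 3x, 5x, \dots$, spaced $2x$ apart starting from $x$.

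Next I would write $n - x = (2x)q + r$ with $0 \le r < 2x$, so that by definition $r = (n-x)\bmod(2x)$. Rearranging gives $n = (2q+1)x + r$, hence $n - r = (2q+1)x$. By the parametrization above this is an odd multiple of $x$, and since $r \ge 0$ it satisfies $n - r \le n$; I would then check that it is the \emph{largest} such multiple by observing that the next odd multiple up is $(2q+3)x = (n-r) + 2x = n + (2x - r) > n$, the inequality being strict because $r < 2x$. Since $f(n,x) = n + 2x - r = (n-r)+2x = (2q+3)x$, the value returned is exactly this next odd multiple. It exceeds $n$ by the strict inequality just noted, and it is the smallest odd multiple exceeding $n$ because the immediately preceding odd multiple $(2q+1)x = n - r$ is $\le n$ and no odd multiple lies strictly between two consecutive ones (they differ by $2x$).

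The argument is essentially routine, so I do not anticipate a serious obstacle; the only points requiring care are bookkeeping ones. The first is to use the nonnegative-remainder convention for $\bmod$ consistently, since this is what guarantees $0 \le r < 2x$ and hence simultaneously $n - r \le n$ and $(2q+3)x > n$. The second is that this same convention keeps the statement correct in the degenerate range $0 < n < x$, where $n - x$ is negative: there $(n-x)\bmod(2x) = n + x$, so $f(n,x) = x$, which is indeed the smallest odd multiple of $x$ exceeding $n$. I would close by noting that the two examples already computed confirm the formula, and that the informal ``largest odd multiple $\le n$, then add $2x$'' description given in the text is precisely the step $(2q+1)x \mapsto (2q+3)x$.
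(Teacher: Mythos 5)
Your proof is correct, but it takes a genuinely different route from the paper. The paper argues by contradiction: it \emph{assumes} (from the informal discussion preceding the lemma) that $n - \left[(n-x)\bmod(2x)\right]$ is the largest odd multiple of $x$ not exceeding $n$, writes it as $k_1x$, supposes some odd multiple $k_2x$ satisfies $n < k_2x < f(n,x)$, and derives $k_2 - k_1 < 2$, which is rejected because $k_2x > n \ge k_1x$. Your argument is direct and constructive: by writing $n - x = 2xq + r$ with $0 \le r < 2x$, you actually \emph{prove} the identity $n - r = (2q+1)x$ that the paper takes for granted, and then exhibit $f(n,x) = (2q+3)x$ as the next point in the arithmetic progression $x, 3x, 5x, \dots$ of spacing $2x$. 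This buys you two things the paper's proof lacks. First, self-containedness: the paper's key premise (that $n - \left[(n-x)\bmod(2x)\right]$ is the largest odd multiple $\le n$) is never proved there, only motivated. Second, your version closes a small logical gap in the paper's final step: from $k_2x > n \ge k_1x$ alone one only gets $k_2 - k_1 \ge 1$, not $\ge 2$; the missing ingredient is that $k_1$ and $k_2$ are both \emph{odd}, so their difference is a positive even integer --- in your formulation this is automatic, since consecutive odd multiples differ by exactly $2x$. Your attention to the remainder convention in the range $0 < n < x$ (where $(n-x)\bmod(2x) = n+x$ and $f(n,x) = x$) also covers a degenerate case the paper never examines; just note that there the multiplier $2q+1$ is negative, so your parametrization of odd multiples should allow all integers $j$, not only $j \ge 0$ --- a purely cosmetic fix, since the spacing argument is unaffected.
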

\begin{proof}
(By contradiction), choose two positive odd integers $n$ and $x$, and suppose that $f(n, x)$ is not the smallest odd multiple of $x$ that is greater than $n$. This means that there exists an integer $k_{2}$ such that $ n < k_{2}x < f(n, x)$, that is,
\[\textup{(2)\: \: \: \: \: \: \: \: \: \:  \: \: \: }  k_{2}x <  n + 2x - [(n - x) mod(2x)].\textup{ \: \: \: \: \: \: \:  \: \: \: \:  \: \: }\] But since $n -  [(n - x) mod(2x)]$ is the largest odd multiple of $x$ less than or equal to $n$, there exists an integer $k_1$ such that $n -  [(n - x) mod(2x)] = k_1x.$ Thus, inequality $(2)$ becomes $k_{2}x  <   2x  +  k_{1}x$, which implies that $k_{2} -  k_{1} < 2$. This is impossible because ($k_2x$ is an odd multiple of $x$ that is) $ > n$, and ($k_1x$ is an odd multiple of $x$ that is) $\leq n$, so the value $k_2 - k_1$ cannot be less than $2$. Therefore, given two positive odd integers $n$ and $x$, the function $f(n, x)$ gives rise to the smallest odd multiple of $x$ exceeding $n$.
 \end{proof}

\textbf{Note:} Lemma $2.4$ is proven for any two arbitrary positive odd integers $n$ and $x$, but we will often let $x$ to be a prime in the interval $[3, \lceil \sqrt{n}\textup{ }\rceil]$.

\subsection{First primality test algorithm}
\textup{\\}
Given a positive odd integer $n > 3$, Let $p_n$ denote the largest prime less than or equal to $\lceil \sqrt{n}\textup{ }\rceil$. We build a list $L$ that contains the elements of $f(n,p)$ where $p$ is a prime limited to $3 \leq p \leq p_n$. Thus, $L = \{f(n, 3), f(n, 5), ... ,f(n, p_n)\}$, in other words, the elements of $L$ are, \{the smallest odd multiple of 3 exceeding $n$, the smallest odd multiple of 5 exceeding $n$, .... the smallest odd multiple of $p_n$ exceeding $n$\} not necessarily in this particular order. When $L$ is sorted in an increasing order, the first element in the sorted list is the minimum of the odd composites exceeding $n$. Let $c_1$ denote this composite, \textbf{we say that $c_1$ is the smallest odd composite exceeding $n$.} This means that if there is any odd integer $k$  between $n$ and $c_1$, then $k$ cannot be a composite. Therefore, $k$ must be a prime.\\\\
In example $1$ page $3$, the sorted list  $L = \{85, 87, 91\}$, this means $c_1 = 85$  is the smallest odd composite that is greater than $n = 81$. Now since there is a gap between $n$ and $c_1$ wide enough to hold one odd integer, we say that the odd integer in this gap that is $n+2 = 81 + 2 = 83 $ cannot be a composite; hence, it is a prime.

\begin{Theorem}
If $n$ is a positive odd integer, then $n + 2$ is a prime if and only if the gap between $c_1$ and  $n$ is $4$ or $6$.
\end{Theorem}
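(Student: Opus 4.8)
The plan is to prove the two implications separately: the backward one follows essentially from the definition of $c_1$, while the forward one rests on a single elementary divisibility fact. Throughout I would take $n>3$ as in the algorithm's standing hypothesis, and I would use the conclusion established just above the statement, namely that $c_1$ is genuinely the smallest odd composite exceeding $n$, so that every odd integer strictly between $n$ and $c_1$ is noncomposite. The one arithmetic ingredient I would isolate at the outset is this: among any three consecutive odd integers exactly one is divisible by $3$, since reducing $n+2,n+4,n+6$ modulo $3$ yields the residues $n+2,n+1,n$, which exhaust $\{0,1,2\}$.

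For the direction $c_1-n\in\{4,6\}\Rightarrow n+2$ prime, I would argue directly. If $c_1=n+4$ or $c_1=n+6$, then $n+2$ is an odd integer with $n<n+2<c_1$. Because $c_1$ is by definition the least odd composite exceeding $n$, no odd integer in the open interval $(n,c_1)$ is composite; in particular $n+2$ is not composite. Since $n>3$ forces $n+2\geq 5>1$, a noncomposite value of $n+2$ must be prime, which is the desired conclusion.

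For the converse, I would suppose $n+2$ is prime and then pin $c_1$ between $n+4$ and $n+6$. First, $n+2$ prime means $n+2$ is not composite, so $c_1\neq n+2$ and hence $c_1\geq n+4$, giving the lower bound $c_1-n\geq 4$. For the upper bound I would apply the divisibility fact to $n+2,n+4,n+6$: exactly one of them is a multiple of $3$, and since $n+2$ is a prime exceeding $3$ it is not that multiple, so one of $n+4,n+6$ is divisible by $3$. That integer is divisible by $3$ and at least $n+4\geq 9>3$, hence composite; therefore the least odd composite satisfies $c_1\leq n+6$. Finally, $n$ and $c_1$ are both odd, so $c_1-n$ is even, and combining $4\leq c_1-n\leq 6$ forces $c_1-n\in\{4,6\}$.

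The argument is elementary, and I expect the only real care to lie in the bookkeeping of small cases and in the reliance on the preceding material. Concretely, the hypothesis $n>3$ is what guarantees $n+2\geq 5$, so that a prime $n+2$ is not the multiple of $3$, and that the odd multiple of $3$ found in $\{n+4,n+6\}$ exceeds $3$ and is therefore genuinely composite rather than being $3$ itself. The subtler dependence, which I would flag rather than reprove, is that $c_1$ really is the smallest odd composite exceeding $n$: this requires the smallest prime factor of that composite to lie in $A_n$, that is, to be at most $\lceil\sqrt n\,\rceil$, which is exactly what the discussion preceding the statement asserts. Granting that, both implications close and the theorem follows.
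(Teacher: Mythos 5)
Your proof is correct and takes essentially the same route as the paper's: the backward implication follows from the minimality of $c_1$, and the forward implication from $c_1 \neq n+2$ together with the fact that one of three consecutive odd integers is divisible by $3$, which the paper merely asserts in a note after its proof and you verify via residues. Your version is in fact slightly more careful than the paper's, since you make explicit the standing hypothesis $n>3$ (needed, e.g., to exclude $n=1$, where $n+2=3$ is prime but the gap to $c_1=9$ is $8$) and the parity step forcing $c_1-n\in\{4,6\}$.
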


\begin{proof}
Choose a positive odd integer $n$, and suppose that $n + 2$ is a prime. The integer $c_1$ is the smallest odd composite greater than $n$. This means that $n - c_1$ must be equal to at least $4$; thus,  $n - c_1$ = $4$ or $6$. (This gap between $n$ and $c_1$ cannot be more than $6$ as we explained in a Note right below this proof on page 3). \\
Conversely, assume that $c_1 - n = 4$ or $6$. Since  $c_1$ is the smallest odd composite greater than $n$, we can state the following. If $c_1 - n = 4$, then the positive odd integer $n+2$ that is between $n$ and $c_1$ is not a composite; thus, $n+2$ is a prime. Similarly, if $c_1 - n = 6$, then the two positive odd integers $n+2$ and $n+4$ that are between $n$ and $c_1$ are not composites. Hence, $n+2$ and $n+4$ must be a pair of twin primes. This means that in whichever circumstance the case applies, we have $n+2$ is a prime. We conclude that $n + 2$ is a prime if and only if the gap between $c_1$ and  $n$ is either $4$ or $6$.
\end{proof}
\textbf{Note:} The gap between $n$ and $c_1$ cannot be greater than $6$ since it is well known that one number must be a multiple of $3$ in every sequence of three consecutive odd numbers.
\begin{corollary}
Given a positive odd integer $n$, we can prove that $n + 2$ is a lower member of a pair of twin primes if and only if the gap between $n$ and $c_1$ is $6$.
\end{corollary}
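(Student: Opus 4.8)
The plan is to reduce everything to Theorem 2.5 together with the defining property of $c_1$ as the smallest odd composite exceeding $n$. First I would unpack the statement to be proved: saying that $n+2$ is the lower member of a pair of twin primes means precisely that both $n+2$ and $n+4$ are prime. So the corollary asks me to characterize the simultaneous primality of $n+2$ and $n+4$ by the single condition $c_1 - n = 6$.

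For the forward direction, I would assume $n+2$ and $n+4$ are both prime. Since $n+2$ is prime, Theorem 2.5 immediately gives that the gap $c_1 - n$ equals $4$ or $6$. It then remains only to rule out the value $4$. If $c_1 - n = 4$, then $c_1 = n+4$; but $c_1$ is by definition an odd composite, which contradicts the assumption that $n+4$ is prime. Hence $c_1 - n = 6$.

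For the converse, I would assume $c_1 - n = 6$, so that $c_1 = n+6$. Because $6$ is one of the two admissible gap values, Theorem 2.5 already tells me that $n+2$ is prime. The only odd integers lying strictly between $n$ and $c_1 = n+6$ are $n+2$ and $n+4$; since $c_1$ is the smallest odd composite exceeding $n$, neither of these can be composite, so both are prime. Thus $n+2$ and $n+4$ form a pair of twin primes with $n+2$ as the lower member, as required.

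I expect the only real subtlety to be the exclusion of the gap-$4$ case in the forward direction, where one must explicitly invoke that $c_1$ is composite in order to contradict the twin-prime hypothesis; everything else follows directly from Theorem 2.5 and the minimality of $c_1$. In particular, no new case analysis on divisibility is needed, since the bound on the gap has already been established in the discussion preceding this corollary.
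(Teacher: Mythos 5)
Your proof is correct and takes essentially the same route as the paper: both directions rest on Theorem 2.5 together with the minimality of $c_1$ as the smallest odd composite exceeding $n$. If anything, your forward direction is slightly more explicit than the paper's, which merely asserts that primality of $n+2$ and $n+4$ forces the gap to be $6$, whereas you justify this by first restricting the gap to $\{4,6\}$ via Theorem 2.5 and then eliminating the value $4$ on the grounds that $c_1 = n+4$ would contradict the compositeness of $c_1$.
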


\begin{proof}
Assume that the gap between $n$ and $c_1$ is $6$, this implies that there are two positive odd integers between $n$ and $c_1$ that are not composite numbers. Hence, they are a pair of twin primes and $n+2$ is the lower member of the pair.\\ Conversely, suppose that $n+2$ is a lower member of a pair of twin primes, this means that $n+4$ is also a prime. Thus, the gap between $n$ and $c_1$ is 6. We conclude that given a positive odd integer $n$, we have $n + 2$ is a lower member of a pair of twin primes if and only if the gap between $n$ and $c_1$ is $6$.
\end{proof}

\begin{corollary}
Given a positive odd integer $n$, we can prove that $n + 2$ is a composite if and only if the gap between $n$ and $c_1$ is 2.
\end{corollary}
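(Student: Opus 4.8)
The plan is to prove both implications directly from the definition of $c_1$ as the smallest odd composite exceeding $n$, exploiting the fact that $n+2$ is the immediate odd successor of the odd integer $n$. The argument should be short, because the only possible values of the gap $c_1 - n$ are $2$, $4$, and $6$: the gap is positive and even (both $c_1$ and $n$ are odd), and the Note following Theorem 2.5 bounds it above by $6$.

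For the forward direction I would assume $n+2$ is composite. Since $n$ is odd, $n+2$ is the very next odd integer after $n$, so no odd integer lies strictly between $n$ and $n+2$. Being an odd composite greater than $n$ and smaller than every other odd integer exceeding $n$, the integer $n+2$ must then be the smallest odd composite exceeding $n$; that is, $c_1 = n+2$, which gives a gap of $2$.

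For the converse I would assume $c_1 - n = 2$, so that $c_1 = n+2$. By the definition of $c_1$ it is a composite, and therefore $n+2$ is composite.

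I expect no serious obstacle, since the statement is essentially the logical complement of Theorem 2.5: as the gap lies in $\{2,4,6\}$ and Theorem 2.5 characterizes ``gap $=4$ or $6$'' as exactly the cases where $n+2$ is prime, the remaining case ``gap $=2$'' must correspond precisely to $n+2$ being composite. The only point requiring care is to justify explicitly why $c_1 = n+2$ in the forward direction, namely that $n+2$ is the first odd candidate above $n$, so that if it is itself composite there can be no smaller odd composite.
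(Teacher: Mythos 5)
Your proof is correct and follows essentially the same route as the paper's own: in the forward direction you identify $n+2$ as the smallest odd composite exceeding $n$ so that $c_1 = n+2$, and in the converse you read off compositeness of $n+2$ directly from the definition of $c_1$. Your explicit remark that $n+2$ is the immediate odd successor of $n$ is a slightly more careful justification of the identification $c_1 = n+2$ than the paper gives, but it is the same argument.
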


\begin{proof}
Assume that $n+2$ is a composite, so $n+2$ is the smallest odd composite that is greater than $n$, but this title belongs to $c_1$. Hence, $n + 2 = c_1$. This means that the gap between 
$n$ and $c_1$ is $2$. Conversely, suppose that the gap between $n$ and $c_1$ is $2$. This means that $n + 2 = c_1$ which is a composite. Therefore,  $n + 2$ is a composite if and only if the gap between $n$ and $c_1$ is $2$.
\end{proof}

\begin{example}
%\footnotesize
For $n = 111$, the set of all odd primes less than or equal to $\lceil \sqrt{111} \:  \rceil$ is $A_{n} =\{3, 5, 7, 11\}.\textup{ } f(n, x) = n + 2x - [(n - x) mod(2x)] \textup{ } where \textup{ } $x$ \textup{ } is \textup{ } in \textup{ }A_{n}.$% \\
\[ f(111, 3) = 111 + 2(3) - [(111 - 3)mod(2*3)] =117 .\]
\[ f(111, 5) = 111 + 2(5) - [(111 - 5)mod(2*5)] =115 .\]
\[ f(111, 7) = 111 + 2(7) - [(111 - 7)mod(2*7)] =119 .\]
\[ f(111, 11) = 111 + 2(11) - [(111 - 11)mod(2*11)] =121 .\]
The sorted list $L =  \{115, 117, 119, 121\}$, so $c_1 = 115$. This means that $c_1 - n = 115 - 111 =  4$; thus, $n + 2 = 113$ is a prime by Theorem $2.5$. 
\end{example}

\begin{example}
%\footnotesize
For $n = 189$, the set of all odd primes less than or equal to $\lceil \sqrt{189} \:  \rceil$ is $A_{n} =\{3, 5, 7, 11, 13\}. \textup{ } f(n, x) = n + 2x - [(n - x) mod(2x)]  \textup{ } where \textup{ } $x$ \textup{ } is \textup{ } in \textup{ }A_{n}.$ 
\[ f(189, 3) = 189 + 2(3) - [(189 - 3)mod(2*3)] =195 .\]
\[ f(189, 5) = 189 + 2(5) - [(189 - 5)mod(2*5)] =195 .\]
\[ f(189, 7) = 189 + 2(7) - [(189 - 7)mod(2*7)] =203 .\]
\[ f(189, 11) = 189 + 2(11) - [(189 - 11)mod(2*11)] =209 .\]
\[ f(189, 13) = 189 + 2(13) - [(189 - 13)mod(2*13)] =195 .\]
The sorted list $L =  \{195, 195, 195, 203, 209\}$, so that $c_1 = 195$. Thus, $c_1 - n = 195 - 189 =  6$; therefore, $n + 2 = 191$ is a lower member of a pair of twin primes by Corollary $2.6$. 
\end{example}

\begin{example}
%\footnotesize
For $n = 297$, the set of all odd primes less than or equal to $\lceil \sqrt{297} \: \rceil$ is $A_{n} =\{3, 5, 7, 11, 13, 17\}. \textup{ }  f(n, x) = n + 2x - [(n - x) mod(2x)]  \textup{ } where \textup{ } $x$ \textup{ } is \textup{ } in \textup{ } A_{n}.$%\\
\[ f(297, 3) = 297 + 2(3) - [(297 - 3)mod(2*3)] =303 .\]
\[ f(297, 5) = 297 + 2(5) - [(297 - 5)mod(2*5)] =305 .\]
\[ f(297, 7) = 297 + 2(7) - [(297 - 7)mod(2*7)] =301 .\]
\[ f(297, 11) = 297 + 2(11) - [(297 - 11)mod(2*11)] =319 .\]
\[ f(297, 13) = 297 + 2(13) - [(297 - 13)mod(2*13)] =299 .\]
\[ f(297, 17) = 297 + 2(17) - [(297 - 117)mod(2*17)] =323 .\]
The sorted list  $L =  \{299, 301, 303, 305, 319, 323\}$, so $c_1 = 299$. Hence, $c_1 - n = 299 - 297 =  2$; therefore, $n + 2 = 299$  is a composite by Corollary $2.7$. \\
\end{example}
%%%%%%%%%%%%%%%%%%%%%%%%%%%%%%%%%%%%%%%%%%%%%%%%%%%%%%%%%%%%%%%%%%%%%%%%%%%%%%%%%%%%%%%%%%%%%%%%%%%%%%%%%%%%%%%%%%%%%%%%%%%%%%%%%%%%%%%%%%%%%%%%%%%%%%%%%%%%%%%%
\section{Primality testing and integer factorization \\using $f(n,x)$ as a standalone function}
\subsection{$f(n,x)$ is an integer factorization function.}
%%%%%%%%%%%%%%%%%%%%%%%%%%%%%%%%%%%%%%%%%%%%%%%%%%%%%%%%%%%%%%%%%%%%%%%%%%%%%%%%%%%%%%%%%%%%%%%%%%%%%%%%%%%%%%%%%%%%%%%%%%%%%%%%%%%%%%%%%%%%%%%%%%%%%%%%%%%%%5%%%%
%This is a consequence of Lemma $2.4$ in page $3$. 
\begin{Theorem}
Given two positive odd integers $n$ and $x$, we can prove that $x$ is a factor of $n$ if and only if $x$ satisfies the equation $ f(n-2, x) = n$. 
\end{Theorem}
\begin{proof}
Let $n$ and $x$ be two positive odd integers and suppose that $x$ is a factor of $n$; thus, $n$ is the smallest odd multiple of $x$ that is greater than $n-2$. By Lemma $2.4$ on page $2$, this means that $f(n-2, x) = n. $ \\Conversely, assume that $f(n-2, x) = n$. By Lemma $2.4$, $f(n-2, x)$ gives the smallest odd multiple of $x$ greater than $n - 2$. This means that $f(n-2, x) = kx$  for some integer $k$. From our hypothesis we have $f(n-2, x) = n$, and now we have $f(n-2, x) = kx$; these two equations imply that $kx = n$, in other words $x$ is a factor of $n$. Therefore, given two positive odd integers $n$ and $x$, it holds true that $x$ is a factor of $n$ if and only if $x$ satisfies the equation $ f(n-2, x) = n.$
\end{proof}
%\textup{\\}
\begin{example}
Using "Walfram, mathematica" \\
What are the prime factors of $n = 15015 = 3*5*7*11*13$? \\
$\textbf{Solution:}$ Given $n = 15015$, 
Theorem $3.1$ in subsection $3.1$ states that the integer solutions of the equation $f(n -2, x) = n$ are the factors of $n$. But since we know that the factors of $n$ are less than or equal to $n$ therefore, 
we can look for $x$ in the interval between $1$ and $n$ this is $1 \leq x \leq n$. Enter the following equation:
\begin{center}
$15013+2x - [(15013 - x) mod(2x)]=15015, \: \: 1 \leq x \leq 15015,$ x  is an integer.
\end{center}
The solutions will be $x = 1, x = 3, x = 5, x=7, x=11, x=13, x=15, x = 21.$ See Table $1$ for the list of solutions using Excel.
\end{example}
%=================================
\begin{table}[h]
%\footnotesize
\caption {\textit{Example 5 solutions using  Excel:} The primes $x_i$ where $f(n - 2, x_i) -  n = 0$ are prime factors of $n$.} %title of the table
\centering % centering table
\begin{tabular}{c ccc} % creating eight columns
\hline\hline %inserting double-line
$n$ & $\:\:\:\:\: x_i $ & $ f(n -2, x) - n$ & \\ [0.5ex] 
\hline % inserts single-line
$15015$  &$\textbf{ \:\:\:1}$ & $\textbf{0}$\\ % Entering row contents
$$ &$\textbf{ \:\:\:3}$ & $\textbf{0}$\\ 
$$ &$\textbf{\:\:\: 5}$ & $\textbf{0}$\\
$$ & $\textbf{\:\:\: 7}$ & $\textbf{0}$\\
$$ & \:\:\: 9 & 6\\
$$ & $\textbf{\:\:\:11}$ & $\textbf{0}$\\
$$ & $\textbf{\:\:\:13}$ & $\textbf{0}$\\
$$ & $\textbf{\:\:\:15}$ & $\textbf{0}$\\
$$ &\:\:\:17 & 30\\
$$ &\:\:\:19 & 14\\
$$ & $\textbf{\:\:\:21}$ & $\textbf{0}$\\
$$ &\:\:\:23 & 4\\
%$$ & \:\:\:29 & 36\\
%$$ & \:\:\:31 & 20\\
%$$ & \:\:\:37 & 44\\
%$$ & \:\:\:41 & 32\\
$$ &\:\:\:...&...\\
%$$ &\:\:\: ... &...\\
$$ & $\textbf{\:\:\:15015}$ & $\textbf{0}$ \\
\hline % inserts single-line
\end{tabular}
\label{table:nonlin} % is used to refer this table in the text
\end{table}
%===============================================================
\begin{example}
Find the prime factors of $n = 7663 = 79*97$.\\
$\textbf{Solution:}$  Given $n = 7663$, 
the integer solutions of  $f(n -2, x) = n$  are all factors of $n$. 
 Enter the following equation:
\begin{center}
$7661 + 2x - [(7661 - x) mod(2x)] - 7663$ , $1 \leq x \leq 7663$, $x$ is an integer.
\end{center}
The solutions will be $x = 1$, $x = 79$, $x = 97$, $x = 7663$. 
See also Table $2$ on page $7$ for all solutions using Excel.
\end{example}
%=================================
\begin{table}[h]
%\footnotesize
\caption {\textit{Example 6 solutions using  Excel:} The primes $x_i$ where $f(n - 2, x_i) -  n = 0$ are prime factors of $n$.}  %title of the table
\centering % centering table
\begin{tabular}{c ccc} % creating eight columns
\hline\hline %inserting double-line
$n$ & $\:\:\:\:\: x_i $ & $ f(n -2, x) - n$ & \\ [0.5ex] 
\hline % inserts single-line
$7663$  &$\textbf{\:\:\:1}$ & $\textbf{0}$\\ % Entering row contents
$$  & \:\:\:3 & 2\\
$$ & \:\:\:5 & 2\\
$$ & \:\:\:7 & 2\\
$$ & \:\:\:9 & 14\\
%$$ & \:\:\:11 & 4\\
%$$ & \:\:\:13 & 20\\
%$$ &\:\:\:17 & 4\\
%$$ &\:\:\:19 & 32\\
%$$ &\:\:\:23 & 42\\
%$$ & \:\:\:29 & 22\\
%$$ & \:\:\:31 & 56\\
%$$ & \:\:\:37 & 70\\
%$$ & \:\:\:41 & 4\\
%$$ & \:\:\:43 & 34\\
%$$ & \:\:\:47 & 92\\
%$$ & \:\:\:53 & 22\\
%$$ & \:\:\:59 & 66\\
$$ &\:\:\:...&...\\
%$$ &\:\:\: ... &...\\
%$$ &\:\:\:61 & 84\\
%$$ &\:\:\:67 & 42\\
%$$ &\:\:\:75 & 62\\
$$ & \:\:\:77 & 144\\
$$ &$\textbf{\:\:\:79}$ & $\textbf{0}$\\
$$ & \:\:\:81 & 32\\
%$$ & \:\:\:83 & 56\\
$$ &\:\:\:...&...\\
%$$ &\:\:\: ... &...\\
$$ & \:\:\:95 & 32\\
$$ &$\textbf{\:\:\:97}$ & $\textbf{0}$\\
$$ & \:\:\:99 & 158\\
$$ &\:\:\:...&...\\
%$$ &\:\:\: ... &...\\
$$ &$\textbf{\:\:\:7663}$ & $\textbf{0}$\\
\hline % inserts single-line
\end{tabular}
\label{table:nonlin} % is used to refer this table in the text
\end{table}
%%%%%%%%%%%%%%%%%%%%%%%%%%%%%%%%%%%%%%%%%%%%%%%%%%%%%%%%%%%%%%%%%%%%%%%%%%%%%%%%%%%%%%%%%%%%%%%%%%%%%%%%%%%%%%%%%%%%%%%%%%%%%%%%%%%%%%%%%%%%%%%%%%%%%%%%%%%%%%%%%
\subsection{$f(n,x)$ is an unconditional general-purpose deterministic primality test.}%  with a time complexity in $\mathcal{O}(\log^2{n})$}
%%%%%%%%%%%%%%%%%%%%%%%%%%%%%%%%%%%%%%%%%%%%%%%%%%%%%%%%%%%%%%%%%%%%%%%%%%%%%%%%%%%%%%%%%%%%%%%%%%%%%%%%%%%%%%%%%%%%%%%%%%%%%%%%%%%%%%%%%%%%%%%%%%%%%%%%%%%%%%%%%%%%%
\begin{Theorem}
A positive odd integer $n$ greater than $1$ is a prime if and only if the equation $f(n-2, x) = n$ has only two solutions $1$ and $n$.
\end{Theorem}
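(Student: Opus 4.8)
The plan is to deduce this statement as an almost immediate consequence of Theorem $3.1$, which already identifies the solution set of $f(n-2,x)=n$ with the odd factors of $n$. The only substantive work remaining is to translate the phrase ``exactly two solutions, $1$ and $n$'' into the classical definition of primality.

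First I would recall that, by Theorem $3.1$, for positive odd integers $n$ and $x$ the integer $x$ satisfies $f(n-2,x)=n$ if and only if $x$ divides $n$. Hence the set of positive odd solutions of the equation is precisely the set of positive odd divisors of $n$. Next I would observe that, because $n$ itself is odd, every positive divisor of $n$ is necessarily odd; consequently the solution set coincides with the full set of positive divisors of $n$, with no even divisor being silently omitted. This identification is the linchpin of the whole argument.

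With it in hand, the two directions are routine. For the forward implication, assume $n$ is prime; then by definition its only positive divisors are $1$ and $n$, so by the identification above the equation $f(n-2,x)=n$ admits exactly the two solutions $x=1$ and $x=n$. For the converse, assume the equation has only the solutions $1$ and $n$; then $n$ possesses exactly two positive divisors, which for $n>1$ is precisely the definition of a prime. Combining the two directions yields the stated biconditional.

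I expect the main (indeed the only) point requiring care to be verifying that the correspondence of Theorem $3.1$ really captures all divisors of $n$ rather than merely the odd ones, i.e.\ that restricting $x$ to odd values loses no divisor; this is exactly what the oddness of $n$ guarantees. I would also record in passing that $x=1$ and $x=n$ are always genuine solutions, since each divides $n$ and each is odd, so the equation never has fewer than two solutions; this makes the dichotomy ``exactly two'' versus ``more than two'' exhaustive and confirms that the criterion is well posed.
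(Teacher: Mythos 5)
Your proposal is correct and takes essentially the same route as the paper's own proof: both directions are read off directly from the factorization Theorem $3.1$, identifying the solution set of $f(n-2,x)=n$ with the set of divisors of $n$ and then invoking the definition of a prime. Your explicit check that every divisor of an odd $n$ is itself odd (so that restricting $x$ to odd values loses no divisor) is left implicit in the paper, and including it slightly tightens the same argument.
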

\begin{proof}
Suppose that a positive odd integer $n$ is a prime, by virtue of the definition of a prime number, $n$ has only two factors $1$ and $n$. But all factors of $n$ are the $x$ solutions of the equation $f(n-2, x) = n$ as a result of the factorization Theorem in subsection $3.1$. Thus, the equation $f(n-2, x) = n$ has only two solutions $1$, and $n$. \\
Conversely, suppose that the equation $f(n-2,x) = n$ has only two solutions $1$ and $n$. By the factorization Theorem in subsection $3.1$ on page $6$, these solutions $1$ and $n$ are all the factors of $n$. Hence, $n$ has only two factors $1$ and $n$, this means that $n$ is a prime as a result of the definition of a prime number. We conclude that an odd integer $n$ greater than $1$ is a prime if and only if the equation $f(n-2, x) = n$ has only two solutions $1$ and $n$.
\end{proof}

\begin{example}
Using "Walfram, mathematica",  is  $n = 139$ a prime? \\
$\textbf{Solution:}$ Given $n = 139$, 
 the integer solutions of $f(n -2, x) = n$  are all the positive integer factors of $n$ by the factorization Theorem in subsection $3.1$. 
Enter the following equation 
\begin{center}
$137 + 2x - [(137 - x) mod(2x)] = 139$, $1 \leq x \leq 139$, $x$ is an integer.
\end{center}
The solutions will be $x = 1, x = 139$. This means that $139$ has no factor other than $1$ and itself. Hence, $139$ is a prime. 
\end{example}

\begin{example}
is $n = 3913 = 7*13*43$ a prime? \\
$\textbf{Solution:}$ 
the integer solutions of  $f(n -2, x) = n$ are all the positive integer factors of $n$ by the factorization Theorem in subsection $3.1$. 
Enter the following equation, 
\begin{center}
$3911 + 2x - [(3911 - x) mod(2x)] = 3913$ , $1 \leq x \leq 3913$, $x$ is an integer.
\end{center} 
The solutions are $x = 1, x= 7,  x = 13, x = 43, x = 91, x = 301, x = 559, x = 3913$.\\ Thus, $3913$ is not a prime. Another way to have all the solutions is to  make a table, choose $x$ such that $1 \leq x \leq n$ and collect the values of $x$ for which the equation $f(n - 2, x) = n$ holds as we did in Examples $5$ and $6$ using Excel.  \\
\end{example}
%%%%%%%%%%%%%%%%%%%%%%%%%%%%%%%%%%%%%%%%%%%%%%%%%%%%%%%%%%%%%%%%%%%%%%%%%%%%%%%%%%%%%%%%%%%%%%%%%%%%%%%%%%%%%%%%%%%%%%%%%%%%%%%%%%%%%%%%%%%%%%%%%%%%%%%%%%%%
\subsection{Discussion on runtime analysis}%%%%%%%%%%%%%%%%%%%%%%%%%%%%%%%%%%%%%%%%%%%%%%%%%%%%%%%%%%%%%%%%%%%%%%%%%%%%%%%%%%%%%%%%%%%%%%%%%%%%%%%%%%%%%%%%%%%%%%%%%%%%%%%%%%%%%%%%%%%%%%
The function $f(n,x)$ has a runtime that scales poorly compared to some other primality test algorithms such as the AKS algorithm \cite{agrawal2004primes}, but it has some attractions and similarities with the trial division. \\
\textbullet\: The function $f(n,x)$ is an unconditional general-purpose deterministic primality test.\\
\textbullet\: The function $f(n,x)$ is a standalone algebraic function not a step by step finite sequence of logical instructions like an algorithm.\\
\textbullet \: The function $f(n,x)$ performs both, primality test and integer factorization.  (It is similar to the trial division in that sense.)\\
\textbullet \: The function $f(n,x)$ gives the exact value of $\omega(n)$ the number of distinct prime factors of $n$ (similar to the trial division). Indeed the $\omega(n)$ is equal to the number of prime solutions of the equation $f(n-2, x) = n$ as proven in subsection $3.1$ and illustrated by examples $5$ and $6$. \\
\textbullet \: The implementation of $f(n,x)$ in a computer is very simple. "If there was an algebraic function equivalent to the trial division, then $f(n,x)$ would be very similar to that function." \\\\
\textbullet \: A major advantage of $f(n,x)$ over the trial division is that $f(n,x)$ is a closed form expression and it can be used as a counting function. In the following process, we shall demonstrate how $f(n,x)$ is used to count the number of odd integers in a given interval. This counting process facilitates a rise of a relation between $\pi(n^2)$, $\sum_{3 \leq p \leq n} \frac{1}{p}$, and a weaker version of $\omega(n)$. \\
%If there were an algebraic function equivalent to the trial division, then $f(n,x)$ would be very similar to that function.
%%%%%%%%%%%%%%%%%%%%%%%%%%%%%%%%%%%%%%%%%%%%%%%%%%%%%%%%%%%%%%%%%%%%%%%%%%%%%%%%%%%%%%%%%%%%%%%%%%%%%%%%%%%%%%%%%%%%%%%%%%%%%%%%%%%%%%%%%%%%%%%%%%%%%%%%%%%%%%%%
%\vspace{1mm}
%%%%%%%%%%%%%%%%%%%%%%%%%%%%%%%%%%%%%%%%%%%%%%%%%%%%%%%%%%%%%%%%%%%%%%%%%%%
\section{Some applications of $f(n,x)$ as a counting function}%
\subsection{Counting the odd integers in the interval $(n^2, (n+2)^2]$} %%%%%%%%%%%%%%%%%%%%%%%%%%%%%%%%%%%%%%%%%%%%%%%%%%%%%%%%%%%%%%%%%%%%%%%%%%%%%%%%%%%%
This leads to an exact formula of $\sum_{3 \leq x_i \leq n} \frac{1}{x_i}$. We count the number of odd composites in the interval $(n^2, (n+2)^2]$, and we add the number of primes that are in the same interval to obtain the total number of odd integers in the interval $(n^2, (n+2)^2]$. The notation $\pi(\mathcal{L})$ denotes the "Lengendre's pie": $\pi(\mathcal{L}) = \pi((n+2)^2) - \pi(n^2)$. \\\\
%\subsection{A relation between $\sum_{3 \leq x_i \leq n} \frac{1}{x_i}$ and $ \pi(\mathcal{L})$ } 
%A relation was developed as a result of counting the number of \textbf{odd integers} between $ n^2$ and $(n+2)^2$. This relation is between the partial sum of the reciprocals of the odd prime numbers that are less than or equal to $n$, and $\pi(\mathcal{L})$ the number of primes in the interval $(n^2, (n+2)^2]$. This partial result is interesting as it lays a foundation to prove the Legendre's Conjecture.\\\\
%\textbullet \:  Counting the odd composites in the interval $(n^2, (n+2)^2]$.\\
Let $x_n$ be the largest prime less than or equal to the ceiling of square root of $n$. The number of odd composites in the interval $(n^2, (n+2)^2]$ is the number of odd composite multiples of $3$ plus the number of odd composite multiples of $5$ ,..., plus the number of odd composite multiples of $x_n$, and we subtract the duplicates from the sum. Here is an example of duplicate counts: Say an odd integer $n_k = {p_1}^{{\alpha}_1}{p_2}^{{\alpha}_2}{p_3}^{{\alpha}_3}$ has three distinct prime factors $3 \leq p_1, p_2, p_3 \leq n$, so $n_k$ is counted three times. First, $n_k$ was counted as a multiple of $p_1$, then as a multiple of $p_2$ and finally as a multiple of $p_3$; thus, it is necessary to subtract the two extra times that the integer $n_k$ was counted. \\
How to count the odd composite multiples of $3$ that are in the interval $(n^2, (n+2)^2]$ ? These numbers start from the smallest odd composite multiple of $3$ greater than ${n}^2$  to the largest odd multiple of $3$ less than or equal to ${(n+2)}^2$. The function $f(n,x)$ can compute the smallest odd composite multiple of $3$ greater than ${n}^2$ that is $f({n}^2, 3)$. It is also known that (the largest odd composite multiple of $3$ less than or equal to ${(n+2)}^2$) is equal to (the smallest odd composite multiple of $3$ that is greater than ${(n+2)}^2) - 6$ this is $f({(n+2)}^2, 3) - 6$. So between the first odd composite multiple of $3$ greater than $n^2$, and the last odd composite multiple of $3$ less than or equal to $(n+2)^2$, every time we move $6$ units, there is an odd multiple of $3$.
In conclusion, the number of odd composite multiples of $3$ in the interval $(n^2, (n+2)^2]$ is, %\\
\[\frac{[f({(n+2)}^2, 3) - 6] - [f({n}^2, 3)]}{6} + 1 = \frac{f({(n+2)}^2, 3) - f({n}^2, 3)}{6}.\]
Similarly, the number of odd composite multiples of $5$ in the interval $(n^2, (n+2)^2]$ is,
\[\frac{[f({(n+2)}^2, 5) - 10] - [f({n}^2, 5)]}{10} + 1 =  \frac{f({(n+2)}^2, 5) - f({n}^2, 5)}{10}.\]
Hence, in general, given a prime $x_i$ such that $3 \leq x_i \leq n$, the number of odd composite multiples of $x_i$ that are in the interval $(n^2, (n+2)^2]$ is, \\
 \[ \frac{f({(n+2)}^2, x_i) - f({n}^2, x_i)}{2x_i}.\]
Thus, the number of odd composites in the interval $(n^2, (n+2)^2]$ is,\\
 \[\left(\sum_{3 \leq x_i \leq n} \frac{f({(n+2)}^2, x_i) - f({n}^2, x_i)}{2x_i} \right) - dup.\]
 \[dup \: = \sum_{{n}^2 < n_k \leq {(n+2)}^2} dup(n_k).\]
\[
    {dup(n_k)}_{{n}^2 < n_k \leq {(n+2)}^2} = 
\begin{cases}
    0,& \text{if  m = 0}\\
    m-1,              & \text{otherwise}
\end{cases}
\: \: \: \: \: \: \:\: \: \:\: \: \]
where $m$ is the number of distinct prime factors less than or equal to $n$, of the odd integer $n_k$. Note: $m$ is simply a weaker version of $\omega(n_k)$.\\\\
Let's give a more comprehensive explanation of the $dup$ function. Suppose that $n_k$ is an odd integer in the interval $({n}^2, {(n+2)}^2]$ and $n_k = {p_1}^{s_1}{p_2}^{s_2}...{\: p_r}^{s_r}$ and $n_k$ has $m$ distinct prime factors less than or equal to $n$, say these prime factors are $p_1, p_2 ,..., p_m$,  then $n_k$ was counted as a multiple $p_1$, a multiple of $p_2$ ,..., and as a multiple of $p_m$. Thus, $n_k$ was duplicated $m - 1$ times. Since $f(n,x)$ does not filter duplicate items, it is necessary to subtract the extra number of times $n_k$ is counted. The sum of these extra number of times each $n_k$ was counted, is the $dup$ function.\\\\
The total number of odd integers in the interval $(n^2, (n+2)^2]$ is,
\[\left(\sum_{3 \leq x_i \leq n} \frac{f({(n+2)}^2, x_i) - f({n}^2, x_i)}{2x_i} \right) - dup + \pi(\mathcal{L}).\]
Where $\pi(\mathcal{L})$ is the "Legendre's Pie". i.e. $\pi(\mathcal{L}) = \pi({(n+2)}^2) - \pi({n}^2)$. 
The total number of odd integers in the interval $(n^2, (n+2)^2]$  is also, 
\[\frac{{(n+2)}^2 - {n}^2}{2}.\]
\textbf{Note:} The function $f(n,x)$ does not always include $(n+2)^2$ in the count of the number of composites in the interval $(n^2, (n+2)^2]$. We shall explain in detail in the following two cases why the inclusion of $(n+2)^2$ in the count depends on the primality of $n+2$. \\\\
\textbullet \: Case 1: If $n+2$ is a composite,\\
then $f(n,x)$ \textbf{will count} $(n+2)^2$ as a composite in the interval $(n^2, (n+2)^2]$. This is because $(n+2)$ is a composite, so $n+2$ is a multiple of some prime $p \leq n$, and $(n+2)^2$ is also a multiple of $p$. Now since $x_i$ is a prime such that $3 \leq x_i \leq n$, this means that $x_i$ will take the value of $p$ at some point. Moreover, because $f(n, x)$ counts all the multiple of $x_i$ that are in the interval $(n^2, (n+2)^2]$; it follows that $(n+2)^2$ will be counted as a multiple of $x_i = p$. Thus, if $(n+2)$ is a composite, then we can say that the number of odd integers in the interval $(n^2, (n+2)^2]$ that $f(n,x)$ has counted is equal to the expected number of odd integers in the interval $(n^2, (n+2)^2]$. This expected number is $\frac{{(n+2)}^2 - {n}^2}{2}.$ The comparison translates to,
\begin{equation}
\left(\sum_{3 \leq x_i \leq n} \frac{f({(n+2)}^2, x_i) - f({n}^2, x_i)}{2x_i} \right) - dup + \pi(\mathcal{L}) = \frac{{(n+2)}^2 - {n}^2}{2}.
\end{equation}
\textbullet \: Case 2: If $n+2$ is a prime,\\
then $f(n,x)$ \textbf{will not count} $(n+2)^2$ as a composite in the interval $(n^2, (n+2)^2]$. This is because the prime $x_i$ is such that $3 \leq x_i \leq n$. This means that $x_i$ cannot be equal to $n+2$ which is the only prime factor of $(n+2)^2$. Hence, $(n+2)^2$ will not be counted as a multiple of any prime $x_i \leq n$. Therefore, we must \textbf{Add 1} to the number of odd integers counted by $f(n,x)$ to compensate the missing count of $(n+2)^2$. So if $n+2$  is a prime, then the number of odd integers in the interval $(n^2, (n+2)^2]$ that $f(n,x)$ has counted \textbf{+1}, is equal to the expected number of odd integers in the interval $(n^2, (n+2)^2]$, which is equal to, $\frac{{(n+2)}^2 - {n}^2}{2}.$  This gives rise to,
\begin{equation}
\left(\sum_{3 \leq x_i \leq n} \frac{f({(n+2)}^2, x_i) - f({n}^2, x_i)}{2x_i} \right) - dup + \pi(\mathcal{L}) + 1= \frac{{(n+2)}^2 - {n}^2}{2}.
\end{equation}
Equations $(4.1)$ and $(4.2)$ differ only by $1$. These two equations will merge with a help of an $\epsilon$ variable that takes the value of $0$ or $1$ depending on the primality of $n+2$. But first, let's simplify equation $(4.2)$ where $f({(n+2)}^2, x_i) = {(n+2)}^2 + 2x_i - ({(n+2)}^2 - x_i)mod(2x_i)$, and $f({n}^2, x_i) =  {n}^2 + 2x_i - ({n}^2 - x_i) mod(2x_i)$. The result after inserting these values into equation $(4.2)$ is, 
\begin{equation}
\sum_{3 \leq x_i \leq n} \frac{{(n+2)}^2 - {n}^2}{2x_i} +  \sum_{3 \leq x_i \leq n} \frac{({n}^2 - x_i) mod(2x_i) -  ({(n+2)}^2 - x_i) mod(2x_i)}{2x_i}- 
\end{equation}
$ - \: dup + \pi(\mathcal{L}) = \frac{{(n+2)}^2 - {n}^2}{2} - 1.$ 
\[\textup{ For simplification, let } C_i =  \frac{({(n+2)}^2 - x_i) mod(2x_i) -  ({n}^2 - x_i) mod(2x_i)}{2x_i}.\]
\[\textup{Hence,} \sum_{3 \leq x_i \leq n} \frac{({n}^2 - x_i) mod(2x_i) -  ({(n+2)}^2 - x_i) mod(2x_i)}{2x_i} = - \sum_{3 \leq x_i \leq n} C_i.\]
Incorporate $C_i$ into equation $(4.3)$, and the result is, 
\[\frac{{(n+2)}^2 - {n}^2}{2} \left(\sum_{3 \leq x_i \leq n} \frac{1}{x_i}  - 1 \right) -  \sum_{3 \leq x_i \leq n} C_i  = dup - \pi(\mathcal{L}) - 1.\]
This means that, \\
\begin {equation}
{\sum_{3 \leq x_i \leq n} \frac{1}{x_i} = 2\left(\frac {dup + \left (\sum_{3 \leq x_i \leq n} C_i \right)  - \pi(\mathcal{L}) - 1}{{(n+2)}^2 - {n}^2} \right) + 1.} 
\end{equation}
equation $(4.4)$ comes from equation $(4.3)$ which is a specific case when $n+2$ is a prime. Case $1$ and $2$ are merged into a general formula,\\
%//////////////////////////////////////////////////////////////////////////////////////
\begin{center}
\fbox{ %\begin{tcolorbox}
\begin{minipage}{4.5 in}
\begin {equation}
{\sum_{3 \leq x_i \leq n} \frac{1}{x_i} = 2\left(\frac {dup + \left (\sum_{3 \leq x_i \leq n} C_i \right)  - \pi(\mathcal{L}) - \epsilon}{{(n+2)}^2 - {n}^2} \right) + 1.}
\end{equation}
\[
    \epsilon = 
\begin{cases}
    1,& \text{if  n+2 is prime}\\
    0,              & \text{otherwise}
\end{cases}
\]
\end{minipage}}
\end{center} 
%///////////////////////////////////////////////////////////////////////////////////
\vspace{3mm}
Equation $(4.5)$ gives an exact formula that establishes a relation between the partial sum of the reciprocals of odd prime numbers $\sum_{3 \leq x_i \leq n} \frac{1}{x_i}$ and $\pi(\mathcal{L})$. This improved upon previous works of other authors, who established the upper and lower bounds of the partial sum of the reciprocals of prime numbers. Moreover, equation $(4.5)$ opens a new window into the territories of the Legendre's Conjecture.\\
\begin{example}
What is the partial sum of the reciprocals of odd primes less than or equal to 11?\\
\textbf{Solution:} Let $n = 11$, so $(n+2) = 13$ is a prime. 
Using equation $(4.5)$ where $n+2$ is prime means that $\epsilon = 1$. To have a better view, here is the list of odd composites between ${11}^2$ and ${13}^2$.\\
\begingroup
\obeylines
\textbf{11*11=121},  (123 = 3, 41),  (125 = 5, 5, 5),  (129 = 3, 43),  (133 = 7, 19),  (135 = 3, 3, 3, 5),  (141 = 3, 47),  (143 = 11, 13),  (145 = 5, 29),  (147 = 3, 7, 7),  (153 = 3, 3, 17),  (155 = 5, 31),  (159 = 3, 53),  (161 = 7, 23),  (165 = 3, 5, 11),  \textbf{13*13=169}. \\%
\endgroup%
$dup = \sum_{{11}^2 <  n_k \leq {13}^2} dup(n_k) = dup(123) + dup(125) + ... + dup(169).$
\[
    {dup(n_k)}_{{11}^2 < n_k \leq {13}^2} = 
\begin{cases}
    0,& \text{if  m = \textup{0}}\\
    m-1,              & \text{otherwise}
\end{cases}
\: \: \: \: \: \: \:\: \: \:\: \: \]
where $m$ is the number of distinct prime factors not exceeding $n=11$, of the odd integer $n_k$.\\
\textbf{Note:} From the definition of $dup(n_k)$, we can see that, if $n_k$ is a prime or a power of a prime, then $dup(n_k) = 0$. So we can skip $n_k$, if $n_k = p^{\alpha}$ where $\alpha\geq1$ is an integer. \\\\
$dup(123 = 3*41) = 1 - 1 = 0$ because $123$ has only one distinct prime factor less than or equal to $11$. Thus, $123$ has no duplicate count. It was only counted once as a multiple of 3.\\
$dup(125=5*5*5) = 1- 1 = 0$ because $125$ has only one distinct prime factor that is less than or equal to $11$, so $125$ also was not counted more than once. \\
...\\
$dup(135=3*3*3*5) = 2 -1 = 1$ because $135$ has two distinct prime factors that are less than or equal to $11$, these factors are $3$ and $5$. Hence, $135$ was counted by $f(n, x)$ as a multiple of $3$ and also as a multiple of $5$. For these reasons, the extra count on $135$ must be eliminated. Similarly, \\
%...\\
$dup(147= 3*7*7) = 2-1 = 1.$\\
...\\
$dup(165=3*5*11) = 3 - 1 = 2$. \\
$dup(169=13*13) = 0$. the integer $169$ has no prime factor less than or equal to $n= 11$.\\\\
\textbullet {\: $ dup = \sum_{{11}^2 <  n_k \leq {13}^2} dup(n_k)  =  1 + 1 + 2 = \textbf{4}$.} \\
\textbullet{ \: $\pi(\mathcal{L}) = \pi({(n+2)}^2) - \pi({n}^2) = \textbf{9}.$ }\\
\textbullet{\:  $ \sum_{3 \leq x_i \leq n} C_i = \sum_{3 \leq x_i \leq n} \frac{({(n+2)}^2 - x_i) mod(2x_i) -  ({n}^2 - x_i) mod(2x_i)}{2x_i}$} $ = \frac{4 - 4}{2*3} + \frac{4 - 6}{2*5} + \frac{8 - 2}{2*7} + \frac{4 - 0}{2*11} = 0 - \frac{1}{5} + \frac{3}{7} + \frac{2}{11} = \textbf{ 0.41039}.$ \\
\textbullet{\: ${(n+2)}^2 - {n}^2 = {13}^2 - {11}^2 = 169 - 121 = \textbf{48}.$} \\
\textbullet{\: $\epsilon = \textbf{1}.$}\\
%\begin{eqnarray*}
$2\left(\frac {dup + \left(\sum_{3 \leq x_i \leq n} C_i \right) - \pi(\mathcal{L})- \epsilon}{{(n+2)}^2 - {n}^2} \right) + 1 = 2\left(\frac {4 + (0.41039) - 9 - 1}{48} \right) + 1.$ \\
$2\left(\frac {dup + \left(\sum_{3 \leq x_i \leq n} C_i \right) - \pi(\mathcal{L})- \epsilon}{{(n+2)}^2 - {n}^2} \right) + 1 = \textbf{0.7671}.$ 
%\end{eqnarray*}
Therefore, by equation $(6)$, $\sum_{3 \leq x_i \leq n} \frac{1}{x_i} = \fbox{\textbf{0.7671}.}$ Let's verify our result by calculating the value on the left side of equation $(4.5)$.\\
$\sum_{3 \leq x_i \leq n} \frac{1}{x_i} = \sum_{3 \leq x_i \leq 11} \frac{1}{x_i} = \frac{1}{3} + \frac{1}{5}+\frac{1}{7}+\frac{1}{11} = \frac{886}{1155} = \fbox{\textbf{0.7671}.}$
\end{example}
%%%%%%%%%%%%%%%%%%%%%%%%%%%%%%%%%%%%%%%%%%%%%%%%%%%%%%%%%
\begin{example}
What is the partial sum of the reciprocals of odd primes less than or equal to 40?\\
\textbf{Solution:} Set $n = 37$ the largest prime less than or equal to $40$, then $n + 2 = 39$. 
Using equation $(4.5)$ where $n+2$ is a composite gives $\epsilon = 0$.\\ Here is the list of the odd composite numbers between ${37}^2$ and ${39}^2$.\\
\begingroup
\obeylines
\textbf{37*37=1369},  (1371 = 3, 457),  (1375 = 5, 5, 5, 11),  (1377 = 3, 3, 3, 3, 17),  (1379 = 7, 197),  (1383 = 3, 461),  (1385 = 5, 277),  (1387 = 19, 73),  (1389 = 3, 463),  (1391 = 13, 107),  (1393 = 7, 199),  (1395 = 3, 3, 5, 31),  (1397 = 11, 127),  (1401 = 3, 467),  (1403 = 23, 61),  (1405 = 5, 281),  (1407 = 3, 7, 67),  (1411 = 17, 83),  (1413 = 3, 3, 157),  (1415 = 5, 283),  (1417 = 13, 109),  (1419 = 3, 11, 43),  (1421 = 7, 7, 29),  (1425 = 3, 5, 5, 19),  (1431 = 3, 3, 3, 53),  (1435 = 5, 7, 41),  (1437 = 3, 479),  (1441 = 11, 131),  (1443 = 3, 13, 37),  (1445 = 5, 17, 17),  (1449 = 3, 3, 7, 23),  (1455 = 3, 5, 97),  (1457 = 31, 47),  (1461 = 3, 487),  (1463 = 7, 11, 19),  (1465 = 5, 293),  (1467 = 3, 3, 163),  (1469 = 13, 113),  (1473 = 3, 491),  (1475 = 5, 5, 59),  (1477 = 7, 211),  (1479 = 3, 17, 29),  (1485 = 3, 3, 3, 5, 11),  (1491 = 3, 7, 71),  (1495 = 5, 13, 23),  (1497 = 3, 499),  (1501 = 19, 79),  (1503 = 3, 3, 167),  (1505 = 5, 7, 43),  (1507 = 11, 137),  (1509 = 3, 503),  (1513 = 17, 89),  (1515 = 3, 5, 101),  (1517 = 37, 41),  (1519 = 7, 7, 31),  \textbf{39*39 =1521}.
\endgroup%
\[dup = \sum_{{37}^2 <  n_k \leq {39}^2} dup(n_k) = dup(1371) + dup(1375) + ... + dup(1521)\]
\[
    {dup(n_k)}_{{37}^2 < n_k \leq {39}^2} = 
\begin{cases}
    0,& \text{if  m = 0}\\
    m-1,              & \text{otherwise}
\end{cases}
\: \: \: \: \: \: \:\: \: \:\: \: \]
where $m$ is the number of distinct prime factors not exceeding $n=37$, of the odd integer $n_k$.\\
In this example, only the $dup(n_k)$ that are not equal to zero are computed to avoid lengthy calculations. \\\\
$dup(1375=5*5*5*11) = 2 - 1 = 1$ because $1375$ has two distinct prime factors that are less than or equal to $37$. \\
$dup(1377=3*3*3*17) = 2 - 1 = 1$ because $1377$ has two distinct prime factors that are less than or equal to $37$. \\
$dup(1395=3*3*5*31) = 3 - 1 = 2$ because $1395$ has three distinct prime factors that are less than or equal to $37$. \\
$dup(1407=3*7*67) = 2 - 1 = 1$ \\ %because $1407$ has two distinct prime factors that are less than or equal to $37$. \\
%$dup(1375=5*5*5*11) = 2 - 1 = 1$\\ % because $1375$ has two distinct prime factors that are less than or equal to $37$. \\
$dup(1419=3*11*43) = 2 - 1 = 1$ \\ %because $1419$ has two distinct prime factors that are less than or equal to $37$. \\
$dup(1421=7*7*29) = 2 - 1 = 1$\\ % because $1421$ has two distinct prime factors that are less than or equal to $37$. \\
$dup(1425=3*5*5*19) = 3 - 1 = 2$\\ % because $1425$ has three distinct prime factors that are less than or equal to $37$. \\
$dup(1435=5*7*41) = 2 - 1 = 1$\\ % because $1435$ has two distinct prime factors that are less than or equal to $37$. \\
$dup(1443=3*13*37) = 3 - 1 = 2$\\ % because $1449$ has three distinct prime factors that are less than or equal to $37$. \\
$dup(1445=5*17*17) = 2 - 1 = 1$\\ % because $1455$ has two distinct prime factors that are less than or equal to $37$. \\
$dup(1449=3*3*7*23) = 3 - 1 = 2$\\ % because $1449$ has three distinct prime factors that are less than or equal to $37$. \\
$dup(1455=3*5*97) = 2 - 1 = 1$\\ % because $1455$ has two distinct prime factors that are less than or equal to $37$. \\
$dup(1463=7*11*19) = 3 - 1 = 2$ \\ %because $1463$ has three distinct prime factors that are less than or equal to $37$. \\
$dup(1479=3*17*29) = 3 - 1 = 2$ \\ %because $1479$ has three distinct prime factors that are less than or equal to $37$. \\
$dup(1485=3*3*3*5*11) = 3 - 1 = 2$\\ % because $1485$ has three distinct prime factors that are less than or equal to $37$. \\
$dup(1491=3*7*71) = 2 - 1 = 1$ \\ %because $1491$ has two distinct prime factors that are less than or equal to $37$. \\
$dup(1495=5*13*23) = 3 - 1 = 2$ \\ %because $1495$ has three distinct prime factors that are less than or equal to $37$. \\
$dup(1505=5*7*43) = 2 - 1 = 1$ \\ %because $1505$ has two distinct prime factors that are less than or equal to $37$. \\
$dup(1515=3*5*101) = 2 - 1 = 1$ \\ %because $1515$ has two distinct prime factors that are less than or equal to $37$. \\
$dup(1519=7*7*31) = 2 - 1 = 1$ \\ %because $1519$ has two distinct prime factors that are less than or equal to $37$. \\
$dup(1521=3*3*13*13) = 2 - 1 = 1$ \\\\ %because $1519$ has two distinct prime factors that are less than or equal to $37$. \\
\textbullet {\: $ dup = \sum_{{37}^2 <  n_i \leq {39}^2} dup(n_i) = \textbf{29}$.} \\
\textbullet{ \: $\pi(\mathcal{L}) = \pi({(n+2)}^2) - \pi({n}^2) = \textbf{21}.$ }\\
\textbullet{\:  $ \sum_{3 \leq x_i \leq n} C_i = \sum_{3 \leq x_i \leq n} \frac{({(n+2)}^2 - x_i) mod(2x_i) -  ({n}^2 - x_i) mod(2x_i)}{2x_i}$} $ = \frac{0 - 4}{2*3} + \frac{6 - 4}{2*5} + \frac{2 - 4}{2*7} + \frac{14 - 16}{2*11} + \frac{0-4}{2*13} + \frac{8-26}{2*17}+ \frac{20-20}{2*19}+ \frac{26-12}{2*23}+ \frac{42-6}{2*29}+ \frac{2-36}{2*31}+ \frac{4-0}{2*37} = \textbf{ -0.952986}.$ \\
\textbullet{\: ${(n+2)}^2 - {n}^2 = {13}^2 - {11}^2 = 169 - 121 = \textbf{152}$.}\\
\textbullet{\: $\epsilon = \textbf{0}$.} \\\\
%\begin{eqnarray*}
$2\left(\frac {dup + \left(\sum_{3 \leq x_i \leq n} C_i \right) - \pi(\mathcal{L}) - \epsilon}{{(n+2)}^2 - {n}^2} \right) + 1 = 2\left(\frac {29 + ( -0.952986) - 21 - 0}{152} \right) + 1.\\
2\left(\frac {dup + \left(\sum_{3 \leq x_i \leq n} C_i \right) - \pi(\mathcal{L}) - \epsilon}{{(n+2)}^2 - {n}^2} \right) + 1  = \textbf{1.09272}$.\\
%\end{eqnarray*}
Therefore, by equation $(4.5)$, $\sum_{3 \leq x_i \leq n} \frac{1}{x_i} = \fbox{\textbf{1.09272}.}$ Let's verify our result by calculating the value on the left side of equation $(4.5)$.
\[\sum_{3 \leq x_i \leq n} \frac{1}{x_i} = \sum_{3 \leq x_i \leq 37} \frac{1}{x_i} = \frac{1}{3} + \frac{1}{5}+...+\frac{1}{37} = \frac{4054408822031}{3710369067405} = \fbox{\textbf{1.09272}.}\]
\end{example}
\vspace{3mm}
\[\textbf{More Examples without detailed calculations.}\]
\begingroup
\obeylines
%********* Equation $4.5$ for squares $(39^2, 41^2)$ ********* 
%\textbullet{\:  $ \sum_{3 \leq x_i \leq n} C_i = 2.41791.$}
%dup = 29.
%$\pi(\mathcal{L}) = 23.$
%$(41^2 - 39^2) = 160.$
%$\epsilon = {1}.$

%Sum from  the right side of equation $4.5$ is,  \textbf{1.09272}.
%  $\sum_{3 \leq x_i \leq 39} \frac{1}{x_i} = \frac{1}{3} + \frac{1}{5}+...+\frac{1}{37} = $ \textbf{1.09272}.\\

\textbf{Example:} Compute the partial sum of reciprocals of odd primes not exceeding n = 41. \\
\textbullet{\:  $ \sum_{3 \leq x_i \leq n} C_i  = -0.162415.$}
dup = 31.
$\pi(\mathcal{L})= 20.$
$(43^2 - 41^2) = 168.$
$\epsilon = {1}.$

 Sum from  the right side of equation $4.5$ is,  \textbf{1.11711}.
  $\sum_{3 \leq x_i \leq 41} \frac{1}{x_i} = \frac{1}{3} + \frac{1}{5}+...+\frac{1}{41} = $\textbf{1.11711}. \\

\textbf{Example:} Compute the partial sum of reciprocals of odd primes not exceeding n = 43. \\
\textbullet{\:  $ \sum_{3 \leq x_i \leq n} C_i  = -1.64745.$}
dup = 37.
$\pi(\mathcal{L}) = 23.$
$(45^2 - 43^2) = 176.$
$\epsilon = {0}.$

 Sum from the right side of equation $4.5$ is, \textbf{1.14037}.
 $\sum_{3 \leq x_i \leq 43} \frac{1}{x_i} = \frac{1}{3} + \frac{1}{5}+...+\frac{1}{43} = $ \textbf{1.14037.} \\

\textbf{Example:} Compute the partial sum of reciprocals of odd primes not exceeding n = 45. \\
\textbullet{\:  $ \sum_{3 \leq x_i \leq n} C_i  = 1.91403.$}
dup = 35.
$\pi(\mathcal{L}) = 23.$
$(47^2 - 45^2) = 184.$
$\epsilon = 1.$
Sum from  the right side of equation $4.5$ is, \textbf{1.14037.}
$\sum_{3 \leq x_i \leq 45} \frac{1}{x_i} = \frac{1}{3} + \frac{1}{5}+...+\frac{1}{43}  = $\textbf{1.14037.}%
\endgroup%
%\vspace{3mm}
%\subsection{The divergence of $\sum_{3 \leq x_i \leq n} \frac{1}{x_i}$}
%At first glance of equation $(6)$, we may miss the divergence of the $\sum_{prime x_i} \frac{1}{x_i}$ that was validated by Euler. We wrote a computer program to investigate the behavior of the $dup$ function for large values of $n$. And as expected, we saw that the $dup$ function grows amazingly fast when $n$ gets larger. Indeed, by $n^2 = 29^2$, we have $dup = 20$ greater than $\pi(\mathcal{L}) = 16$. By $n^2 = 5791^2$, we have $dup =  12076$  is more than twice bigger than $n$ and $7$ times larger than $ \pi(\mathcal{L}) = 1369,$. The $dup$ function oscillates (possibly aperiodically), but shows a general upward trend as $n$ increases. The fast rate of grow of the $dup$ function is a solid observation that indicates a possible divergence of the sum of reciprocals of prime numbers. 
%\textup{\\}
%\section{Onto the Legendre's Conjecture}%
\subsection{Counting the odd integers in the interval $(n^2, n(n+2)]$} %\\\\\\\\\\\\\\\\\\\\\\\\\\\\\\\\\\\\\\\\\\\\\\\\\\\\\\\\\\\\\
This count give an exact formula of $\pi(\mathcal{L})_1 = \pi(n(n+2))-\pi(n^2)=\pi(n+1)^2-\pi(n^2)$, the number of primes between two consecutive squares when starting from an odd square $n^2$.\\\\
\textbf{Note:} The number of primes in the interval  $(n^2, n(n+2)]$ is the same as the number of prime in the interval  $(n^2, (n+1)^2]$ since  $ n(n+2)+1 = (n+1)^2$ and $(n+1)^2$ is an even number. We use $n(n+2)$ because $f(n,x)$ takes only odd integers. \\\\
The new equation is built by counting the odd integers in the interval $(n^2, n(n+2)]$. To avoid repeating the process of equation $(4.5)$, we simply deduce this new equation from $(4.5)$. We replace the upper bound $(n+2)^2$ in equation $(4.5)$ with the new upper bound $n(n+2)$. The variables in the new equation have similar definitions to those in equation $(4.5)$, but the interval here is $(n^2$, $n(n+2)]$ instead of $(n^2$, $(n+2)^2].$  
\[
    {dup(n_k)}_{{n}^2 < n_k \leq {n(n+2)}} = 
\begin{cases}
    0,& \text{if  m = 0}\\
    m-1,              & \text{otherwise}
\end{cases}
\: \: \: \: \: \:\]
where $m$ is the number of distinct prime factors not exceeding $n$ of the odd integer $n_k$.
\[(dup)_1 \: := \sum_{{n}^2 < n_k \leq {n(n+2)}} dup(n_k).\]
\[\pi(\mathcal{L})_1 = \pi(n(n+2)) - \pi(n^2).\]
\[C_1 =  \frac{({n(n+2)} - x_i) mod(2x_i) - (n^2 - x_i) mod(2x_i) }{2x_i}.\]
%//////////////////////////////////////////////////////////////////////////////////////
\begin{center}
\fbox{ %\begin{tcolorbox}
\begin{minipage}{4.5 in}
\begin {equation}
{\sum_{3 \leq x_i \leq n} \frac{1}{x_i} = 2\left(\frac {(dup)_1 + \left (\sum_{3 \leq x_i \leq n} C_1 \right)  - \pi(\mathcal{L})_1}{{n(n+2)} - {n}^2} \right) + 1.}
\end{equation}
\end{minipage}}
\end{center} 
%///////////////////////////////////////////////////////////////////////////////////
\textup{\\}
\subsection{Counting the odd integers in the interval $(n(n+2), (n+2)^2]$}
This count gives an exact formula of $\pi(\mathcal{L})_2 = \pi((n+2)^2)-\pi(n(n+2))=\pi((n+2)^2)-\pi((n+1)^2)$, the number of primes between two consecutive squares when starting from an even square $(n+1)^2$.\\
The equation obtained by counting the odd integers in the interval $(n(n+2), (n+2)^2]$ is deduced from $(4.5)$ by replacing $n^2$ with $n(n+2)$.The variables in the new equation have similar definitions to those in $(4.5)$. %but we shall investigate in the interval $(n(n+2), (n+2)^2]$ instead of $(n^2, (n+2)^2].$  %//////////////////////////////////////////////////////////////////////////////////////
\[
    {dup(n_k)}_{n(n+2) < n_k \leq {(n+2)}^2} = 
\begin{cases}
    0,& \text{if  m = 0}\\
    m-1,              & \text{otherwise}
\end{cases}
\: \: \: \: \: \: \:\: \: : \: \: \]
where $m$ is the number of distinct prime factors not exceeding $n$ of the odd integer $n_k$.
\[(dup)_2 \: := \sum_{{n(n+2)} < n_k \leq {(n+2)^2}} dup(n_k).\]
\[\pi(\mathcal{L})_2 = \pi((n+2)^2) - \pi(n(n+2)).\]
\[C_2 =  \frac{({(n+2)^2} - x_i) mod(2x_i) - (n(n+2) - x_i) mod(2x_i) }{2x_i}.\]
\begin{center}
\fbox{ %\begin{tcolorbox}
\begin{minipage}{4.5 in}
\begin {equation}
{\sum_{3 \leq x_i \leq n} \frac{1}{x_i} = 2\left(\frac {(dup)_2 + \left (\sum_{3 \leq x_i \leq n} C_2 \right)  - \pi(\mathcal{L})_2 - \epsilon}{{(n+2)}^2 - {n(n+2)}} \right) + 1.}
\end{equation}

\[
    \epsilon = 
\begin{cases}
    1,& \text{if  n+2 is prime}\\
    0,              & \text{otherwise}
\end{cases}
\]
\end{minipage}}
\end{center} 
%////////////////////////////////////////////////////////////////////////////////////////////////////////////////////////////////////////////////////////////////////////////////////////////////////////////////////////////////////////////////////////////////////////////
%///////////////////////////////////////////////////////////////////////////////////////////////////////////////////////// PROVE WHEN STARTING FROM AN ODD NUMBER ///////////////////////////////////////////////////////////////////////////////////
\textup{\\}
\subsection{A possible path to the Legendre's Conjecture} 
We obtain an exact formula of the partial sum of the receiprocals of odd primes in equation $(4.6)$. This is in terms of $\pi(\mathcal{L})_1$ the number of primes between $n^2$ and $(n+1)^2$.  One can argue that, \\
\textbullet \: If $\pi(\mathcal{L})_1 = 0$, then  $\sum_{3 \leq x_i \leq n} \frac{1}{x_i}$ would be bigger than its own upper bound. And that is impossible. Hence, there must be a prime between $n^2$ and $(n+1)^2$.  But one would need a good upper bound and a descent approximation of $(dup)_1$. A similar reasonment can be made with $\pi(\mathcal{L})_2$ in equation $(4.7)$. 

\section{Refinement of the prime counting function}
This is another application of $f(n,x)$ as a counting function. 
%\textit{Michael M. Ross} Professor at the University of Cambridge Boston, while responding to a question on the Legendre Conjecture in the website \textit{Quora.com}, stated the following: \textit{ "... So I suggest that there is no refinement of our understanding of the prime counting function or distribution that can prove this conjecture. That sort of puts the problem beyond the reach of conventional number theory. If it were within reach, mathematicians would be smelling blood, as with the Twin Prime, and racing for the finish. Instead, the conjecture is conveniently marginalized and ignored - even said to be irrelevant. But what could be a more basic question than this?"} 
A refinement of the prime counting function leads to a better understanding of the distribution of prime numbers. It also increases the scope to access and wrestle down some open problems in prime numbers. \\
The correctness and precision of the results in examples $10$ and $11$, echo on the strength of equation (4.5). We shall build a similar equation by counting the odd integers in the interval $(1, n^2]$ instead of $(n^2, (n+2)^2]$. \\\\
%\textbullet \: Counting the odd composites in the interval $(1, n^2]$\\
Let $x_n$ denote the largest primes not exceeding the ceiling of the square root of $n$. The number of odd composites in the interval $(1, n^2]$ is the number of odd composite multiples of $3$, plus the number of odd composite multiples of $5$ ..., plus the number of odd composite multiples of $x_n$, and we subtract the duplicates.
The odd composite multiples of $3$ not exceeding ${n}^2$ start from $3$ where $3$ is not included, and extend to the largest odd multiple of $3$ not exceeding ${n}^2$ which is given by $n^2 - (n^2 - 3) mod (2*3)$. Thus, the number of odd composite multiples to $3$ not exceeding $n^2$ is,\\
\[\frac{n^2 - (n^2 - 3) mod (2*3) -  3}{2*3}.\]
Similarly, the number of odd composite multiples of $5$ less than or equal to ${n}^2$ is,  \\
\[\frac{n^2 - (n^2 - 5) mod (2*5) -  5}{2*5}.\]
In general, given a prime $x_i$ such that $3 \leq x_i \leq n$ the number of odd composite multiples of $x_i$ not exceeding ${n}^2$ is,\\
\[\frac{n^2 - (n^2 - x_i) mod (2x_i) -  x_i}{2x_i}.\]
Consequently, the number of odd composites greater than $1$ and not exceeding ${n}^2$ is,\\
\[\left(\sum_{3 \leq x_i \leq n} \frac{n^2 - (n^2 - x_i) mod (2x_i) -  x_i}{2x_i} \right) - dup.\]
 \[dup \: = \sum_{1 < n_k \leq n^2} dup(n_k).\]
\[ \: \:\: \: \:\: \: 
    {dup(n_k)}_{1 < n_k \leq n^2} = 
\begin{cases}
    0,& \text{if  m = 0}\\
    m-1,              & \text{otherwise}
\end{cases}
\: \: \: \: \: \: \:\: \: \:\: \: \]
where $m$ is the number of distinct prime factors less than or equal to $n$, of the odd integer $n_k$. \\\\%Note: $m$ is simply a weaker version of $\omega(n_k)$.\\\\
The total number of odd integers in the interval $(1, n^2]$ is,\\
\[\left(\sum_{3 \leq x_i \leq n} \frac{n^2 - (n^2 - x_i) mod (2x_i) -  x_i}{2x_i} \right) - dup + \pi({n^2}).\]
\[\sum_{3 \leq x_i \leq n} \frac{n^2}{2x_i} - \left(\sum_{3 \leq x_i \leq n} \frac{(n^2 - x_i) mod (2x_i) +  x_i}{2x_i} \right) - dup + \pi({n^2}).\]
\[\textup{Set  \: } B_i =  \frac{(n^2 - x_i) mod(2x_i) + x_i}{2x_i}.\]% The total number of odd integers greater than 1 and not exceeding $n^2$ is,
\[\sum_{3 \leq x_i \leq n} \frac{n^2}{2x_i} - \sum_{3 \leq x_i \leq n} B_i - dup + \pi({n^2}).\]
 The total number of odd integers in the interval $(1, n^2]$ is also,
\[\frac{n^2 - 1}{2}.\]
We obtain the equality,
\[\sum_{3 \leq x_i \leq n} \frac{n^2}{2x_i} - \sum_{3 \leq x_i \leq n} B_i - dup + \pi({n^2}) = \frac{n^2 -1}{2}.\]
This simplifies to, %There is no need for an epsilon in the equation above because $n^2$ is included in the count. The result after simplification is,
\[\sum_{3 \leq x_i \leq n} \frac{1}{x_i} = 2\left(\frac{dup + \left(\sum_{3 \leq x_i \leq n} B_i \right) - \pi({n^2}) - \frac{1}{2} }{n^2}\right) + 1.\]
\begin {equation}
\fbox{$\pi(n^2) = dup + \left(\sum_{3 \leq x_i \leq n} B_i \right) - \frac{1}{2} - \frac{n^2}{2} \left(\sum_{3 \leq x_i \leq n} \frac{1}{x_i}  - 1\right) .$}
\end{equation}
\textbf{\\Note:} The prime $2$ is not included in the count as we only worked with odd integers.\\ Equation $(5.1)$  is a refinement of the prime counting function it is an "exact formula", and it gives more detailed information. We can see that $\pi(n^2)$ is expressed in terms of some important values with historic interests such as the partial sum of the reciprocals of odd primes inferior or equal to $n$, and the $dup$ function which is derived from a well known function $\omega(n)$. 
\begin{example}
Compute $\pi(7^2)$.\\
\[dup \: = \sum_{1 < n_k \leq 49} dup(n_k) = dup(3) + dup(5)+dup(7)+dup(9)+...+dup(49).\]
By the definition of the $dup$ function, we know that if $n_k$ is a prime or a power of a prime, then $dup(n_k) = 0$. 
So we can skip $n_k$, if $n_k = p^{\alpha}$ where $\alpha\geq1$ is an integer. \\ 
%$dup(9)=dup(3*3)=1-1=0.$\\
$dup(15)=dup(3*5)=2-1=1.$\\
$dup(21)=dup(3*7)=2-1=1.$\\
%$dup(25)=dup(5*5)=1-1=0.$\\
%$dup(27)=dup(3*3*3)=1-1=0.$\\
$dup(33)=dup(3*11)=1-1=0.$\\
$dup(35)=dup(5*7)=2-1=1.$\\
$dup(39)=dup(3*13)=1-1=0.$\\
$dup(45)=dup(3*3*5)=2-1=1.$\\
%$dup(49)=dup(7*7)=1-1=0.$\\
\textbullet \: $dup \: = \sum_{1 < n_k \leq 49} dup(n_k) = 1+1+1+1= 4.$\\
\textbullet \: $\sum_{3 \leq x_i \leq n} B_i = \frac{(7^2 - 3) mod(2*3) + 3}{2*3} + \frac{(7^2 - 5) mod(2*5) + 5}{2*5}+ \frac{(7^2 - 7) mod(2*7) + 7}{2*7} = \frac{7}{6} +  \frac{9}{10}+  \frac{1}{2}.$\\
\textbullet \: $\sum_{3 \leq x_i \leq 7} \frac{1}{x_i} =  \frac{1}{3} +  \frac{1}{5} +  \frac{1}{7} .$
\begin{eqnarray*}
\pi(n^2) &=& dup + \left(\sum_{3 \leq x_i \leq n} B_i \right) - \frac{1}{2} - \frac{n^2}{2} \left(\sum_{3 \leq x_i \leq n} \frac{1}{x_i}  - 1\right) .\\
&=&  4+(\frac{7}{6} +  \frac{9}{10}+  \frac{1}{2}) - \frac{1}{2}-\frac{7^2}{2}(\frac{1}{3} +  \frac{1}{5} +  \frac{1}{7} -1).\\
\pi(7^2)&=&14. %\:  (odd \: primes). 
\end{eqnarray*}
\end{example}
\subsection{Comparing $\pi(x)$ with its refinement}
\textup{\\}
The prime number Theorem, first proved by Hadamard \cite{hadamard1896distribution} and de la Vallee-Poussin \cite{poussin1897distribution} states that \\
\[\pi(x) \sim \frac{x}{\ln x} \textup{ \: \: as x $\to \infty$}.\]
And in its strongest known form, the prime number Theorem is a statement that\\
\begin {equation}
%\fbox{$
\pi(x) = \textup{Li}(x) + R(x).%$}
\end{equation}
where
\[\textup{Li}(x) := \int^x_2 \frac{dt}{\log t} = x \left(\frac{1}{\log x} + \frac{1!}{{\log}^2 x} + ... + \frac{m!}{{\log}^{m+1} x} + \mathcal{O}\left(\frac{1}{{\log}^{m+2} x}\right) \right) \] for any fixed integer $m \geq 0$ and 
\[R(x) \ll {x } exp\left( - C\delta(x)\right), \textup{ \: \:} \delta(x) := (\log x)^{3/5}(\log\log x)^{-1/5}     (C > 0) \textup{ \cite{hardy1979introduction}}.\]
\\ The function $f(n,x_i)$ where $n$ is a positive odd integer greater than $3$ and $x_i$ is an odd prime less than or equal to the ceiling of the square root of  $n$, states that,
\begin {equation}
%[\textup{(9) \; \; \; \; \: } 
%\fbox{$
\pi(n^2) = dup + \left(\sum_{3 \leq x_i \leq n} B_i \right) - \frac{1}{2} - \frac{n^2}{2} \left(\sum_{3 \leq x_i \leq n} \frac{1}{x_i}  - 1\right).%$}
%\]
\end{equation}
 where 
 \[\textup{\: \: \: \: \: \: \: \: \: \: \:\: \: \: \: \: \: \: \:} dup \: = \sum_{1 < n_k \leq {n}^2} dup(n_k). \textup{ \: \: \: \: \: \: \: \: \: \: \: \: \: \: \: \: \: \: }\]
\[\: \: \: \: \: \: \: 
    {dup(n_k)}_{1 < n_k \leq n^2} = 
\begin{cases}
    0,& \text{if  m = 0}\\
    m-1,              & \text{otherwise}
\end{cases}
\: \: \: \: \: \: \:\: \: \:\: \: \]
where $m$ is the number of distinct prime factors less than or equal to $n$, of the odd integer $n_k$.% Note: $m$ is simply a weaker version of $\omega(n_k)$.
\[\textup{\: \: \: \: \: \: \: \:\: \: \: \: \: \: \: } B_i =  \frac{(n^2 - x_i) mod(2x_i) + x_i}{2x_i} .\textup{ \: \: \: \: \: \: \: \: \: \: \: \: \: \: \: \: \: \: \: \: \: \: \: }\]

\textbullet \: Equation $(5.2)$ from the prime number Theorem gives an approximation of $\pi(x)$, while equation $(5.3)$ is an "exact formula" of $\pi(n^2)$. \\

\textbullet \: %Our observations that lead to equation $(5.3)$ can be easy exercises, at a level preceding a first course in elementary number theory. Hence,
Equation $(5.3)$ can be useful to introduce the prime counting function without going much deeper into the Riemann Hypothesis. \\

\textbullet \: The proof of equation $(5.3)$ is elementary, it follows the counting process we used to obtain the equation. \\

\textbullet \: Equation $(5.3)$ yields much deeper insights into the distribution of the prime numbers. But to have an exact value of $\pi(n^2)$, one may have to count by hand the $dup$ function. This may be adequate for small values of $n$ as illustrated in examples $10$ and $11$. But large values of $n$ would challenge us to develop and improve an approximation of the $dup$ function. \\

\textbullet \: "The precise behavior of $\pi(x) - \textup{Li}(x)$ in equation $(5.2)$ depends on the location of the zeros of the Riemann zeta function, and cannot be determined until we have more precise information about them than we have now."\cite{barkley1962approximate} While the precision of $\pi(n^2)$ in equation $(5.3)$ (for large values of $n$) depends on the approximation of $dup = \sum_{1 < k \leq n^2} dup(k)$ that is related to the summatory function of $\omega(k)$ given by $\sum_{2}^{n} \omega(k) = n\ln \ln n + B_1 n +  \mathcal{O}\left(\frac{n}{\ln n} \right).$ Where $B_1$ is a Mertens constant \cite{ram2010}.\\

\textbullet \: Equation $(5.3)$ has its limitations. Computing $\pi(n^2)$ requires that we know the primes not exceeding $ n$ to calculate $\sum_{3 \leq p_i \leq n} \frac{1}{p_i}$ and $\sum_{3 \leq p_i \leq n} B_i$. But the presence of $\sum_{3 \leq p_i \leq n} \frac{1}{p_i}$ in the formula of $\pi(n^2)$ can also be an asset if it is used as a stepping stone to the Riemann's Hypothesis. Moreover, equation $(5.3)$ is in terms of $dup = \sum_{1 < k \leq n^2} dup(k)$ that we don't know its exact formula. But due to Hardy, Ramanujan \cite{hardy2000normal} and Erdos \cite{erdos1935normal}, we have good information on $\omega(n)$ that can be useful to approximate the dup function.\\

The number of distinct prime factors of a given integer $n$, is to equation $(5.3)$ what  the location of the zeros in the Riemann zeta function is to the prime number Theorem. It opens new windows into more accurate and precise estimations of the prime counting function and refines our understanding of the distribution of prime numbers. 
According to some experts, the difficulty of the Riemann's Hypothesis is as follows: \\
"There is no approach currently known to understand the distribution of prime numbers well enough to establish the desired approximation, other than by studying the Riemann zeta function and its zeros." \cite{mathexchange} . Equation $(5.3)$ can be an alternative approach, and it can bring new perspectives in future investigations of the Riemann's Hypothesis. \\%It is an exact formula of the prime counting function, and it is in function of the partial sum of the reciprocals of odd primes. This partial sum can be easily manipulated to have equivalent expression of the harmonic series that can help further investigate the  Riemann's Hypothesis. 
\bibliographystyle{plain}
\bibliography{bibfile2}

\end{document}